\newtheorem{theorem}{Theorem}[section]
\newtheorem{lemma}[theorem]{Lemma}
\newtheorem{proposition}[theorem]{Proposition}
\newtheorem{corollary}[theorem]{Corollary}
\theoremstyle{definition}
\theoremstyle{remark}
\numberwithin{equation}{section}
\def\fnote#1{\footnote}
\def\natu{{\mathbb N}}
\def\real{{\mathbb R}}
\def\ignora#1{}
\def\n3#1{\left\vert  \! \left\vert \! \left\vert \, #1 \, \right\vert \!
  \right\vert \! \right\vert }
\begin{document}

\title{ Big slices versus big relatively weakly open subsets in Banach spaces }

\author{Julio Becerra Guerrero, Gin{\'e}s L{\'o}pez-P{\'e}rez and Abraham Rueda Zoca}
\address{Universidad de Granada, Facultad de Ciencias.
Departamento de An\'{a}lisis Matem\'{a}tico, 18071-Granada
(Spain)} \email{glopezp@ugr.es, juliobg@ugr.es}

\thanks{The first author was partially supported by MEC (Spain) Grant MTM2011-23843 and Junta de Andaluc\'{\i}a grants
FQM-0199, FQM-1215. The second author was partially supported by
MEC (Spain) Grant MTM2012-31755 and Junta de Andaluc\'{\i}a Grant
FQM-185.} \subjclass{46B20, 46B22. Key words:
  slices, relatively weakly open sets, Radon-Nikodym property, renorming.}
\maketitle \markboth{J. Becerra, G. L\'{o}pez and A. Rueda   }{
  Slices versus weakly open sets }

\begin{abstract}
We study the unknown differences between the size of slices and
relatively weakly open subsets of the unit ball in Banach spaces.
We show that every Banach space containing $c_0$ isomorphically
satisfies that every slice of its unit ball has diameter 2  so
that its unit ball contains nonempty relatively weakly open
subsets with diameter arbitrarily small, which answer an open
question and stresses the differences between the size of slices
and relatively weakly open subsets of the unit ball of Banach
spaces.

\end{abstract}

\section{Introduction}
\par
\bigskip

The well known Radon-Nikodym (RNP) property in Banach spaces is
characterized by the existence of slices with diameter arbitrarily
small in every closed and bounded subset of the space. Similarly,
a Banach space $X$ has the point of continuity property (PCP) if
every nonempty closed and bounded subset of $X$ has relatively
weakly open subsets with diameter arbitrarily small. We refer to
\cite{BB}, \cite{GGMS} and \cite{GMS} for background about RNP and
PCP. It is clear then that RNP implies PCP, however there are
Banach spaces satisfying PCP and failing RNP \cite{BR}. In the
last years, one can find what we can call the big slice phenomena,
that is, examples of Banach spaces where every slice or every
nonempty relatively weakly open subset of its unit ball has
diameter 2, a property extremely opposite to RNP or PCP. These
examples include infinite-dimensional uniform algebras
\cite{NyWe}, infinite-dimensional $C^*$-algebras \cite{BLR},
infinite-dimensional M-embedded spaces \cite{LoPe},   Banach
spaces with the Daugavet property \cite{Sh}, etc. Also, it is
known \cite[Lemma I.1.3]{DGZ} that all these spaces have extremely
rough dual norm, which is a property extremely opposite to the
Fr{\'e}chet differentiability. The big slice phenomena probably
started in the paper of O. Nygaard and D. Werner \cite{NyWe}, but
after discover many examples with this phenomena and the
connections with other well known geometrical properties, like
Daugavet property or extreme roughness, the phenomena has now its
own life and new geometrical properties in Banach spaces have
appeared, the slice diameter 2 property and the diameter 2
property. Recall now the precise definitions.

Given a Banach space $X$, we say that $X$ has the slice diameter 2
property (SD2P) if every slice of the unit ball of $X$ has
diameter 2. Similarly, we say that $X$ has the diameter 2 property
(D2P) if every nonempty relatively weakly open subset of the unit
ball of $X$ has diameter 2.

With the above definitions it is clear that SD2P implies non-RNP
and D2P implies non-PCP. As RNP and PCP are isomorphic properties,
that is, they are independent of the equivalent norm considered in
the space, one can see SD2P and D2P like the extremely opposite
geometrical properties to RNP or PCP, since SD2P and D2P are not
independent of the norm considered in the space.

From the definitions, one deduce that D2P implies SD2P. In fact
all known Banach spaces  with the SD2P up to now also satisfying
D2P. It is then an open problem wether these two properties are in
fact different. As RNP and PCP are different properties, it is
natural thinking that SD2P and D2P are also different properties.
However, the well known example of Banach space with PCP and
failing RNP is $B$, the natural predual of James tree space $JT$,
constructed in \cite{J}, and it is proved in the paper by W.
Schachermayer, A. Sersouri and E. Werner \cite{ScSeWe} that $B$
fails the SD2P. So the natural candidate to example of Banach
space with SD2P and failing D2P doesn't work.

The aim of this note is to prove the existence of a Banach space
satisfying SD2P and failing D2P, which answers by the negative an
open problem stated firstly in \cite{ALN}. In fact, much more can
be shown. We prove in Theorem \ref{c0} that every Banach space
containing isomorphically $c_0$, the classical Banach space of
null sequences with the sup norm,   can be equivalently renormed
satisfying SD2P and so that its unit ball contains nonempty
relatively weakly open subsets with diameter arbitrarily small. As
a consequence, every Banach space containing isomorphic copies of
$c_0$ can be equivalently renormed satisfying SD2P and failing
D2P. For this, we first construct in Proposition \ref{K} and
Theorem \ref{Kw} a closed, bounded, convex and symmetric subset of
$c$, the Banach space of convergent sequences with the sup norm,
so that every slice of $K$ has diameter 2 and its unit ball
contains nonempty relatively weakly open subsets with diameter
arbitrarily small. Finally, we get in Corollary \ref{corfin} that
the $\ell_p$-sum of Banach spaces satisfying SD2P and failing D2P
also satisfies SD2P and fails D2P.

We pass now to introduce some notation. For a Banach space $X$,
$X^*$ denotes the topological dual of $X$,  $B_X$ and $S_X$ stand
for the closed unit ball and unit sphere of $X$, respectively, and
$w$ denotes the weak topology in $X$. We consider only real Banach
spaces. A slice of a set $C$ in $X$ is a set of $X$ given by
$$S=\{x\in C :x^*(x)>\sup x^*(C) -\alpha \}$$
where $x^*\in X^*$ and $\alpha  <\sup x^*(C)$.

Recall that a slice of $B_X$ is a nonempty relatively weakly open
subset of $B_X$ and the family
$$\{\{x\in B_X: \vert x_i^*(x-x_0)\vert<\varepsilon,\ 1\leq i\leq
n\}:n\in\natu ,\ x_1^*,\cdots ,x_n^*\in X^*\}$$ is a basis of
relatively weakly open neighborhoods of $x_0\in B_X$. So every
relatively weakly open subset of $B_X$ has nonempty intersection
with $S_X$, whenever $X$ has infinite dimension.

$\natu^{<\omega}$ stands for the set of all ordered finite
sequences of positive integers and denotes by $0$ the empty
sequence. If $\alpha=(\alpha_1,\cdots ,\alpha_n)\in
\natu^{<\omega}$, we define the length of $\alpha$ by $\vert
\alpha\vert =n$ and $\vert 0\vert =0$. Also we use the natural
order in $\natu^{<\omega}$ given by:
$$\alpha\leq \beta \ {\rm if}\ \vert \alpha\vert \leq \vert
\beta\vert\ {\rm and}\  \alpha_i=\beta_i\ \forall i\in \{1,\cdots
,\vert \alpha\vert\}.$$ Also we do $0\leq \alpha\ \forall
\alpha\in \natu^{<\omega}$.

As $\natu^{<\omega}$ is a countable set we can construct a
bijective map $\phi :\natu^{<\omega}\rightarrow \natu$ so that
$\phi (0)=1$ and $\phi(\alpha)\leq\phi(\beta)$ whenever
$\alpha\leq \beta\in \natu^{<\omega}$ and $\phi(\alpha, j)\leq
\phi(\alpha ,k)$ for every $\alpha\in \natu^{<\omega}$ and $j\leq
k\in\natu$. Indeed, consider $\{p_n\}$ an enumeration of prime
positive integers numbers and define the bijective map
$\phi_0:\natu^{<\omega}\rightarrow \natu$ given by
$\phi_0(\alpha_1,\ldots ,\alpha_k)=p_1^{\alpha_1}\cdots
p_k^{\alpha_k}.$ Now take an strictly increasing map
$\phi_1:\phi_0(\natu^{<\omega})\rightarrow \natu$ and put
$\phi=\phi_0\circ\phi_1$. Then $\phi$ satisfies the desired
properties. Observe that, from the above construction,
$\{\phi(\alpha ,j)\}_j$ is a strictly increasing sequence   for
every $\alpha\in\natu^{<\omega}$.

\section{Main result}
\par
\bigskip

We begin constructing a subset $A$ of $c$, the space of convergent
scalar sequences with the sup norm. For this, $\{e_n\}$ and
$\{e_n^*\}$ stand for the usual basis and the sequence of
biorthogonal functionals of $c_0$, the space of null scalars
sequences with the sup norm. Define for every $\alpha\in
\natu^{<\omega}$, $e_{\alpha}=:e_{\phi(\alpha)}\in c$,
$e_{\alpha}^*=:e_{\phi(\alpha)}^*\in c^*$ and $x_{\alpha}\in c$ by
$x_{\alpha}(i)=1$ if $\phi^{-1}(i)\leq \alpha$ and
$x_{\alpha}(i)=-1$ in otherwise. It is clear that $x_{\alpha}\in
S_c$ for every $\alpha\in \natu^{<\omega}$. Note that if $\alpha,\
\beta\in\natu^{<\omega}$ are incomparable then $\Vert
x_{\alpha}-x_{\beta}\Vert_c=2$.

Define $A=\{x_{\alpha}:\alpha\in\natu^{<\omega}\}$, which is a
subset of the unit sphere of $c$ and $K=\overline{\rm co}(A\cup
-A)$ which is a closed, convex and symmetric subset of $B_c$ with
diameter 2. Throughout this note, the aforementioned elements
$e_{\alpha}$, $e_{\alpha}^*$, $x_{\alpha}$ and the sets $A$ and
$K$ will be used without previous notice.

The first step is to prove that every slice of $K$ has diameter 2.

\begin{proposition}\label{K} Every slice of $K$ has diameter 2, as a subset
of $c$.\end{proposition}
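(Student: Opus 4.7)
The plan is to show that every slice of $K$ contains two points at distance $2$, which together with the obvious bound $K\subseteq B_c$ forces the slice to have diameter exactly $2$. Fix a slice
$$S=\{x\in K: x^*(x)>\sup x^*(K)-\varepsilon\}$$
of $K$. Since $K=\overline{{\rm co}}(A\cup -A)$, the linear functional $x^*$ attains the same supremum on $K$ and on $A\cup -A$, so some element of $A\cup -A$ lies in $S$. Replacing $x^*$ by $-x^*$ if necessary, I may assume that this element is $x_\alpha$ for some $\alpha\in\natu^{<\omega}$.

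Next I would look for a partner of $x_\alpha$ in $S$ among the one-step extensions $x_{(\alpha,j)}\in A$, $j\in\natu$. By construction, $x_\alpha$ and $x_{(\alpha,j)}$ coincide in every coordinate except position $\phi(\alpha,j)$, where $x_\alpha$ equals $-1$ and $x_{(\alpha,j)}$ equals $+1$; in particular $\Vert x_\alpha-x_{(\alpha,j)}\Vert_c=2$ for every $j$, and both vectors have tail limit $-1$. Exploiting the standard identification of $c^*$ with $\ell_1$, every $x^*\in c^*$ acts as $x^*(x)=\lambda_0\lim_n x(n)+\sum_{i=1}^\infty \lambda_i x(i)$ with $\sum_{i\ge 0}\vert\lambda_i\vert<\infty$, and the identical tails of $x_\alpha$ and $x_{(\alpha,j)}$ neutralize the $\lambda_0$ term, leaving
$$x^*(x_{(\alpha,j)})-x^*(x_\alpha)=2\lambda_{\phi(\alpha,j)}.$$
Since $\{\phi(\alpha,j)\}_j$ is strictly increasing to infinity and the sequence $(\lambda_i)$ is summable, this difference tends to $0$; hence $x_{(\alpha,j)}\in S$ for $j$ large enough, producing a pair of points in $S$ at distance $2$.

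I do not expect serious obstacles: the argument rests on two features of the construction already in place, namely the uniform tail value $-1$ of every $x_\beta$ (which disposes of the limit functional) and the strict monotonicity of $j\mapsto\phi(\alpha,j)$ recorded at the end of the introduction (which forces $\lambda_{\phi(\alpha,j)}\to 0$). The main conceptual point is simply that the closed convex and symmetric hull does not introduce new extreme directions for $x^*$, so one is reduced to testing $S$ against the countable family $\pm A$, and within that family each $x_\alpha$ has a natural weakly accessible antipodal neighbor $x_{(\alpha,j)}$ at sup-distance $2$.
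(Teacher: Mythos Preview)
Your argument is correct and follows the same line as the paper's proof: pick an element $x_\alpha\in S\cap A$ (using the symmetry of $K$), and then find a one-step extension $x_{(\alpha,j)}\in S$ at sup-distance $2$. The only cosmetic difference is that the paper phrases the key step as ``$\{x_{(\alpha,j)}\}_j$ converges weakly to $x_\alpha$'', whereas you verify this explicitly via the $\ell_1$-representation of $c^*$; both arguments use that $x_{(\alpha,j)}-x_\alpha=2e_{\phi(\alpha,j)}$.
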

\begin{proof} Pick $x^*\in S_{c^*}$, $\lambda <\sup x^*(K) $ and put
$S=\{x\in K:x^*(x)>\sup x^*(K) -\lambda\}$. As $S$ is a slice of
$K$ and $K=\overline{\rm co}(A\cup -A)$, we deduce that $S$ meets
$A$ or $S$ meets $-A$. From the symmetry of $K$we can assume that
$S\cap A\neq\emptyset$. Then there is $\alpha\in\natu^{<\omega}$
such that $x_{\alpha}\in S$. Pick $j\in\natu$. Then $x_{(\alpha,
j)}$ is an element in $A$ given by $x_{(\alpha, j)}(i)=1$ if
$\phi^{-1}(i)\leq (\alpha, j)$ and $x_{(\alpha, j)}(i)=-1$ in
otherwise. Hence $\{x_{(\alpha, j)}\}_j$ is a sequence in
$A\subset K$ weakly convergent to $x_{\alpha}$. So there is
$j\in\natu$ such that $x_{(\alpha ,j)}\in S$ and $$diam(S)\geq
\Vert x_{(\alpha, j)}-x_{\alpha}\Vert\geq\vert x_{(\alpha,
j)}(\phi((\alpha, j)))-x_{\alpha}(\phi((\alpha, j)))\vert=\vert
1-(-1)\vert=2.$$ Recalling that $K$ has diameter 2, we deduce that
$S$ has diameter 2, being $S$ any slice of $K$. \end{proof}

Now, we prove that $K$, as a subset of $c$ has relatively weakly
open subsets with diameter arbitrarily small.

\begin{proposition}\label{Kw} Given $n\in\natu$ and $\rho>0$ with
$\rho <\frac{1}{n(32n-21)}$, one has that $diam(W_n)<\frac{9}{n}$,
where $W_n$ is the relative weak open subset of $K\subset c$ given
by
$$W_n=\{x\in K:e_{(0,i)}^*(x)>\frac{1}{n}-1-\rho,\ 1\leq i\leq n,\
\lim_n x(n)<-1+\rho\}.$$\end{proposition}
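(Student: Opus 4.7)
My plan is to express each $x\in W_n$ as a (limit of) convex combination from $A\cup -A$ and translate the linear constraints defining $W_n$ into quantitative bounds on the resulting coefficients, which in turn control $x$ coordinate by coordinate. Writing $x\approx\sum_\alpha\lambda_\alpha x_\alpha+\sum_\alpha\mu_\alpha(-x_\alpha)$ with $\lambda_\alpha,\mu_\alpha\ge 0$ and $\sum(\lambda_\alpha+\mu_\alpha)=1$, introduce $c_\gamma:=\sum_{\alpha\ge\gamma}(\lambda_\alpha-\mu_\alpha)$ and $M:=\sum\mu_\alpha$. A direct computation from the definition of $x_\alpha$ yields the identity $x(\phi(\gamma))=2c_\gamma-c_0$ with $c_0=1-2M$, so every coordinate of $x$ --- in particular $\lim_k x(k)=-c_0$ --- is encoded in the family $\{c_\gamma\}$. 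An analogous family $\{c_\gamma^y\}$ is attached to $y\in W_n$.

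First I would read the hypotheses of $W_n$ off as bounds on these coefficients. The tail condition $\lim_k x(k)<-1+\rho$ is precisely $c_0>1-\rho$, equivalently $M<\rho/2$, so the ``negative'' part of the decomposition is negligible. Each functional inequality $e^*_{(0,i)}(x)>\tfrac1n-1-\rho$, combined with the bound on $c_0$, forces $c_{(0,i)}>\tfrac1{2n}-\rho$ for every $i=1,\dots,n$. Because the index sets $\{\alpha\ge(0,i)\}$ are pairwise disjoint, summing yields $\sum_{i=1}^n c_{(0,i)}>\tfrac12-n\rho$; in particular, the total $\lambda$-mass carried by the $\alpha$'s not extending any of $(0,1),\dots,(0,n)$ is strictly smaller than $\tfrac12+O(n\rho)$. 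All analogous estimates hold for $y$.

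Next I would bound $|x(\phi(\gamma))-y(\phi(\gamma))|=|2(c_\gamma^x-c_\gamma^y)-(c_0^x-c_0^y)|$ at each coordinate. At $\gamma=0$ one has $|c_0^x-c_0^y|<\rho$ directly. At $\gamma=(0,i)$ with $1\le i\le n$, the lower bound on $c_{(0,i)}$ from the $W_n$-hypothesis combines with the upper bound $c_{(0,i)}\le 1-M-\sum_{j\ne i,\,j\le n}c_{(0,j)}$ to pin both $c_{(0,i)}^x$ and $c_{(0,i)}^y$ into a common small window. For the remaining coordinates the residual-mass inequality from the previous paragraph, together with the pointwise constraint $|2c_\gamma-c_0|\le 1$ coming from $x,y\in K\subset B_c$, controls the contribution. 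Assembling these estimates gives a main term of order $9/n$ plus corrections of orders $\rho$, $n\rho$, and $n^2\rho$, and the hypothesis $\rho<\tfrac1{n(32n-21)}$ is calibrated precisely to absorb these corrections and yield the strict inequality $\|x-y\|<\tfrac9n$.

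The main obstacle is the treatment of the ``free'' coordinates $\gamma\notin\{0\}\cup\{(0,i):1\le i\le n\}$. At these $W_n$ imposes no direct constraint on the $c$-coefficients, and the crude bound $|c_\gamma^x-c_\gamma^y|\le\tfrac12+O(n\rho)$ (the full residual mass) is far too weak. The sharp argument must combine the mass-balance $\sum(\lambda_\alpha+\mu_\alpha)=1$ with the sharp lower bounds on each $c_{(0,i)}$ and with the tree structure of $\natu^{<\omega}$ to show that whatever residual $\lambda$-mass one of $x,y$ places at an unconstrained coordinate cannot be matched in opposing sign by the other without violating one of the pinned-down inequalities. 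It is this mass-balance analysis, together with the disjointness of the subtrees under $(0,1),\dots,(0,n)$, that ultimately produces the $9/n$ bound and dictates the specific constant $32n-21$ in the admissible range of $\rho$.
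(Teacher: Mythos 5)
Your reduction to convex combinations, the splitting into positive and negative mass, the use of the tail condition to force $M<\rho/2$, and the use of the functionals $e^*_{(0,i)}$ together with the disjointness of the subtrees $\{\alpha\geq (0,i)\}$ is exactly the paper's strategy, up to notation (the paper first peels off the negative part, normalizes the positive part $a\in co(A)$, and shifts by $\mathbf{1}$ to get disjointly supported nonnegative vectors). But your write-up stops at precisely the point where all of the work lies. You correctly compute $c_{(0,i)}>\tfrac{1}{2n}-\rho$, correctly observe that this pins down only about half of the total mass so that the residual mass on ``free'' nodes can be as large as about $\tfrac12$, and then assert that a ``mass-balance analysis'' must nevertheless force the residual contributions of $x$ and $y$ to nearly cancel. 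No such argument is given, and the intermediate claim that the constrained coordinates are ``pinned into a common small window'' is false: the admissible window for $c_{(0,i)}$ is roughly $(\tfrac{1}{2n},\tfrac12+\tfrac1{2n})$, which has width about $\tfrac12$.

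Moreover, the obstacle you flagged cannot be overcome from the stated hypotheses, so the missing step is not just hard but nonexistent. Take $n\geq 10$, $\beta=(n+1)$, $\beta'=(n+2)$, and set $x=\sum_{i=1}^{n}\tfrac{1}{2n}x_{(0,i)}+\tfrac12 x_{\beta}$ and $y=\sum_{i=1}^{n}\tfrac{1}{2n}x_{(0,i)}+\tfrac12 x_{\beta'}$. Then $e^*_{(0,j)}(x)=e^*_{(0,j)}(y)=\tfrac1n-1>\tfrac1n-1-\rho$ for $1\leq j\leq n$ and both limits equal $-1$, so $x,y\in W_n$; yet $x(\phi(\beta))=0$ while $y(\phi(\beta))=-1$, whence $\mathrm{diam}(W_n)\geq 1>\tfrac9n$. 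The paper's own proof conceals this in the passage from \eqref{agorros} to \eqref{sumAi}: since $e^*_{(0,i)}(\hat a)=2\sum_{j\in A_i}\lambda_j$, the correct consequence is $\sum_{j\in A_i}\lambda_j>\tfrac12(\tfrac1n-2\rho)(1-\tfrac{\rho}{2})^{-1}$, half of what is asserted, and with the correct constant the complement-mass bound \eqref{complemento} degrades from $O(n\rho)$ to roughly $\tfrac12$ and the final estimate collapses. Both the paper's argument and your outline are repaired by taking the threshold in the definition of $W_n$ to be $\tfrac2n-1-\rho$ (the natural one, since the witness $x_0$ satisfies $e^*_{(0,j)}(x_0)=\tfrac2n-1$): then each subtree carries mass at least about $\tfrac1n$, the residual mass is $O(n\rho)$, and the $\tfrac9n$ bound follows along the lines you sketch. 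Your computation $c_{(0,i)}>\tfrac1{2n}-\rho$ is the correct one; pushing honestly on the ``main obstacle'' you identified would have exposed the discrepancy rather than deferring it.
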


\begin{proof} First of all, check that $x_0=\sum_{i=1}^n
\frac{x_{(0,i)}}{n}\in W_n$. For this note that $\lim_n
x_{(0,i)}(n)=-1$ and then $\lim_n x_0(n)=-1<-1+\rho$. Furthermore
$x_0$ is a convex combination of elements of $A\subset K$ and so
$x_0\in K$. Finally, for $ 1\leq j\leq n$, one has
$$e_{(0,j)}^*(x_0)=\sum_{i=1\ i\neq j}^n
\frac{1}{n}e_{(0,j)}^*(x_{(0,i)})+\frac{1}{n}
e_{(0,j)}^*(x_{(0,j)})=-\frac{n-1}{n}+\frac{1}{n}>\frac{1}{n}-1-\rho.$$

Then $W_n$ is nonempty. In order to prove that $diam
(W_n)<\frac{9}{n}$, it is enough to see that $diam(W_n\cap
co(A\cup -A))<\frac{9}{n}$. For this, pick  arbitraries $x, x'\in
co(A\cup -A)$, hence there are $\lambda, \lambda'\in (0,1]$, $a,\
a',\ -b,\ -b'\in co(A)$ such that $x=\lambda a+(1-\lambda)b$ and
$x'=\lambda' a'+(1-\lambda')b'$. Now $\lim_n a(n)=\lim_n a'(n)=-1$
and $\lim_n b(n)=\lim_n b'(n)=1$. As $x\in W_n$, we have that
$\lim_n \lambda a(n)+(1-\lambda)b(n)<-1+\rho$ and then we get that
\begin{equation}\label{landa}
2(1-\lambda)<\rho. \end{equation}
 Similarly we get that
\begin{equation}\label{landaprima}
2(1-\lambda')<\rho,
\end{equation}
and so
\begin{equation}\label{landa-landaprima}
\vert\lambda -\lambda'\vert  <\rho/2. \end{equation}
 For
$i\in\{1,\cdots, n\}$ one has, taking into account \ref{landa},
\ref{landaprima} and the fact $x=\lambda a+(1-\lambda)b\in W_n$
that
$$e_{(0,i)}^*(a)>\frac{\frac{1}{n}
-1-\rho-(1-\lambda)e_{(0,i)}^*(b)}{\lambda}>
\frac{\frac{1}{n}-1-\rho-\rho/2}{\lambda}.$$ It follows that

\begin{equation}\label{a} e_{(0,i)}^*(a)>(\frac{1}{n}-1-\frac{3\rho}{2})(1-\frac{\rho
}{2})^{-1}.
\end{equation}
Similarly, one gets that
\begin{equation}\label{aprima}
e_{(0,i)}^*(a')>(\frac{1}{n}-1-\frac{3\rho}{2})(1-\frac{\rho
}{2})^{-1}.
\end{equation}

Now, applying \ref{landa}, \ref{landaprima} and
\ref{landa-landaprima}, we have that
\begin{equation}\label{primera}
\Vert x-x'\Vert\leq \Vert\lambda a-\lambda'
a'\Vert+\Vert(1-\lambda)b-(1-\lambda')b'\Vert\leq \Vert\lambda
a-\lambda' a'\Vert+\rho\leq
\end{equation}
$$
\lambda\Vert a-a'\Vert+\vert\lambda-\lambda'\vert+\rho\leq\ \Vert
a-a'\Vert+\frac{3\rho}{2}.
$$

Now, our goal is estimate $\Vert a-a'\Vert$. For this put
$a=\sum_{j=1}^{p}\lambda_j x_{\alpha_j}$ and
$a'=\sum_{j=1}^q\beta_j x_{\alpha_{j}^{'}}$, where $p,q\in \natu$,
$\lambda_j,\beta_j>0$,
$\sum_{j=1}^p\lambda_j=\sum_{j=1}^q\beta_j=1$ and
$x_{\alpha_j},x_{\alpha_{j}^{'}}\in  A$.

Denotes by ${\bf 1}$ the sequence in $c$ with all its coordinates
equal 1, then $\Vert a-a'\Vert =\Vert a+{\bf 1}-(a'+{\bf
1})\Vert.$ Now $$ a+{\bf 1} =\sum_{j=1}^{p}\lambda_j
x_{\alpha_j}+{\bf 1}=\sum_{j=1}^p\lambda_j (x_{\alpha_j}+{\bf
1})=\sum_{j=1}^p\lambda_j \hat{x}_{\alpha_j},$$ where
$\hat{x}_{\alpha_j}$ is the element in $c$ given by
$\hat{x}_{\alpha_j}(i)=2$ if $\phi^{-1}(i)\leq \alpha_j$ and
$\hat{x}_{\alpha_j}(i)=0$ in otherwise. Similarly $a'+{\bf
1}=\sum_{j=1}^q\beta_j\hat{x}_{\alpha_j^{'}}$, where
$\hat{x}_{\alpha_j^{'}}$ is the element in $c$ given by
$\hat{x}_{\alpha_j^{'}}(i)=2$ if $\phi^{-1}(i)\leq\alpha_j^{'}$
and $\hat{x}_{\alpha_j^{'}}(i)=0$ in otherwise.

Now we have from \ref{a} and \ref{aprima} that
\begin{equation}\label{agorros}
e_{(0,j)}^*(\hat{a}),\
e_{(0,j)}^*(\hat{a}')>(\frac{1}{n}-2\rho)(1-\frac{\rho }{2})^{-1}\
\forall \ 1\leq i\leq n , \end{equation}
 where $\hat{a}=a+{\bf 1}$ and
$\hat{a}'=a'+{\bf 1}$.

For every $i\in\{1,\cdots ,n\}$ we define now
$$A_i=\{j\in\{1,\cdots ,p\}:\alpha_j\geq (0,i)\},\
A_i^{'}=\{j\in\{1,\cdots ,q\}:\alpha_j^{'}\geq (0,i)\}.$$ If
$i\neq k$ then $\alpha_i$ and $\alpha_k$ are incomparable and also
$\alpha_i^{'}$ and $\alpha_k^{'}$ are incomparable, hence $A_i\cap
A_k=\emptyset$ and $A_i^{'}\cap A_k^{'}=\emptyset$.

Now we have that from \ref{agorros} that
\begin{equation}\label{sumAi}
\sum_{j\in A_i}\lambda_j>(\frac{1}{n}-2\rho)(1-\frac{\rho
}{2})^{-1} ,\ \sum_{j\in
A_i^{'}}\beta_j>(\frac{1}{n}-2\rho)(1-\frac{\rho }{2})^{-1}.
\end{equation}

Then $$1=\sum_{j=1}^p\lambda_j=\sum_{j\in\cup_{i=1}^n
A_i}\lambda_j+\sum_{j\in(\cup_{i=1}^n
A_i)^c}\lambda_j=\sum_{i=1}^{n}\sum_{j\in
A_i}\lambda_j-\sum_{j\in(\cup_{i=1}^n A_i)^c}\lambda_j ,$$ and we
deduce from \ref{sumAi} that , for every $k\in\{1,\cdots ,n\}$,
\begin{equation}\label{sumAk}
\sum_{j\in A_k}\lambda_j=1-\sum_{i=1\ i\neq k}\sum_{j\in
A_i}\lambda_j-\sum_{j\in(\cup_{i=1}^n A_i)^c}\lambda_j<\break
1-\sum_{i=1\ i\neq k}^n
\frac{1}{n}-\frac{3\rho}{2}=
\end{equation}
$$
1-(n-1)(\frac{1}{n}-2\rho)(1-\frac{\rho }{2})^{-1}.
$$

Similarly
\begin{equation}\label{sumAkprima}
 \sum_{j\in
A_k^{'}}\beta_j<1-(n-1)(\frac{1}{n}-2\rho)(1-\frac{\rho
}{2})^{-1}.
\end{equation}

Also, from \ref{sumAi}
\begin{equation}\label{complemento}
\sum_{j\in(\cup_{i=1}^n A_i)^c}\lambda_j,\ \sum_{j\in(\cup_{i=1}^n
A_i^{'})^c}\beta_j<(2n-\frac{1}{2})\rho (1-\frac{\rho}{2} )^{-1}.
\end{equation}

Observe that  the vectors $\sum_{j\in
A_i}\lambda_j\hat{x}_{\alpha_j}-\sum_{j\in
A_i^{'}}\beta_j\hat{x}_{\alpha_{j}^{'}}$ have disjoint supports
for coordinates $k>1$ and $1\leq i\leq n$, and so
\begin{equation}\label{soportes}
max_{k>1}\vert(\sum_{i=1}^n(\sum_{j\in A_i}\lambda_j
\hat{x}_{\alpha_j}-\sum_{j\in
A_i^{'}}\beta_j\hat{x}_{\alpha_j^{'}}))(k)\vert\leq 2 max_{1\leq
i\leq n}\{\sum_{j\in A_i}\lambda_j +\sum_{j\in A_i^{'}}\beta_j\} ,
\end{equation}
since $\Vert \hat{x}_{\alpha_j}\Vert=\Vert
\hat{x}_{\alpha_j^{'}}\Vert=2.$

Now, applying \ref{complemento} and \ref{soportes}, we get
$$\Vert
\hat{a}-\hat{a}^{'}\Vert=\Vert \sum_{j=1}^p\lambda_j
\hat{x}_{\alpha_j}-\sum_{j=1}^q\beta_j\hat{x}_{\alpha_j^{'}}\Vert=$$
$$max\{\vert(\sum_{j=1}^p\lambda_j
\hat{x}_{\alpha_j}-\sum_{j=1}^q\beta_j\hat{x}_{\alpha_j^{'}})(k)\vert:k\in\natu
\}=$$
$$max\{\vert(\sum_{j=1}^p\lambda_j
\hat{x}_{\alpha_j}-\sum_{j=1}^q\beta_j\hat{x}_{\alpha_j^{'}})(\phi(0))\vert,\
\vert (\sum_{j=1}^p\lambda_j
\hat{x}_{\alpha_j}-\sum_{j=1}^q\beta_j\hat{x}_{\alpha_j^{'}})(k)\vert:k>1\}=$$
$$max\{0,\vert(\sum_{j=1}^p\lambda_j
\hat{x}_{\alpha_j}-\sum_{j=1}^q\beta_j\hat{x}_{\alpha_j^{'}})(k)\vert:k>1\}\leq$$
$$max\{\vert(\sum_{i=1}^n(\sum_{j\in A_i}\lambda_j
\hat{x}_{\alpha_j}-\sum_{j\in
A_i^{'}}\beta_j\hat{x}_{\alpha_j^{'}}))(k)+(\sum_{j\in(\cup_{i=1}^n
A_i)^c}\lambda_j\hat{x}_{\alpha_j})(k)-$$
$$(\sum_{j\in(\cup_{i=1}^n
A_i^{'})^c}\beta_j\hat{x}_{\alpha_j^{'}})(k)\vert:k>1\}\leq$$
$$max\{\vert(\sum_{i=1}^n(\sum_{j\in A_i}\lambda_j
\hat{x}_{\alpha_j}-\sum_{j\in
A_i^{'}}\beta_j\hat{x}_{\alpha_j^{'}}))(k)\vert:k>1\}+$$
$$+2(\sum_{j\in(\cup_{i=1}^n A_i)^c}\lambda_j
+\sum_{j\in(\cup_{i=1}^n A_i^{'})^c}\beta_j)\leq$$
$$2max_{1\leq i\leq n}\{\sum_{j\in
A_i}\lambda_j+\sum_{j\in A_i^{'}}\beta_j\}+4(2n-\frac{1}{2})\rho
(1-\frac{\rho}{2} )^{-1}\leq$$
$$4(1-(n-1)(\frac{1}{n}-2\rho)(1-\frac{\rho }{2})^{-1})
+4(2n-\frac{1}{2})\rho (1-\frac{\rho}{2} )^{-1}=$$
$$4(\frac{1}{n}+(4n-3)\rho )(1-\frac{\rho }{2})^{-1}.$$

Finally, we conclude  from \ref{primera} and the above estimation
that
$$\Vert x-x'\Vert\leq\Vert a-a'\Vert+3\rho/2\leq$$
$$4(\frac{1}{n}+(4n-3)\rho )(1-\frac{\rho }{2})^{-1}+\frac{3\rho}{2}= (\frac{8}{n}+(32n-21)\rho -\frac{3\rho^2}{2})(2-\rho)^{-1}<$$
$$ \frac{8}{n}+(32n-21)\rho < \frac{9}{n}$$
since $\rho <\frac{1}{n(32n-21)}$. Hence we have proved that $diam
(W_n)< \frac{9}{n}.$
\end{proof}

The above results find a closed, bounded, convex and symmetric
subset $K$ of $B_c$ satisfying that every slice of $K$ has
diameter 2 and $K$ contains nonempty relatively weakly open sets
with diameter arbitrarily small. Our next goal is showing how one
can get a Banach space whose unit ball behaves like  $K$ with
respect the size of slices and relatively weakly open subsets. For
this we need the following

\begin{lemma}\label{lemac0} Let $X$ be a Banach space containing an isomorphic
copy of $c_0$. Then there is an equivalent norm $| \|
 \cdot\| | $ in $X$ satisfying that $(X
,|\|\cdot\| |)$ contains an isometric copy of $c$ and for every
$x\in B_{(X,|\|\cdot\| |)}$ there are sequences $\{x_n\}$,
$\{y_n\}\in B_{(X,|\|\cdot\| |)}$ weakly convergent to $x$ such
that $ | \| x_n -y_n\| | =2$ for every $n\in \natu$. In fact,
$x_n=x+(1-\alpha_n)e_n$ and $y_n=x-(1+\alpha_n)e_n$ for some
scalars sequence $\{\alpha_n\}$ with $\vert \alpha_n\vert\leq 1$
for every $n$.  \end{lemma}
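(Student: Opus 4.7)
The plan is to locate inside $X$ an isomorphic copy $Y$ of $c$, pull back the standard $c$-norm to $Y$, and extend it to an equivalent norm on $X$ via a ``max'' formula that reproduces the $c$-norm on $Y$ and controls the $X$-distance to $Y$. Since $c\cong c_{0}$ and $X\supseteq c_{0}$ isomorphically, I fix a closed subspace $Y\subseteq X$ with an isomorphism $\phi\colon Y\to c$. Writing $\hat e_{n}\in c$ and $\hat e_{n}^{\,*}\in c^{*}$ for the standard basis and coordinate functionals of $c$, set $\|y\|_{Y}:=\|\phi(y)\|_{c}$, $e_{n}:=\phi^{-1}(\hat e_{n})\in Y$ and $f_{n}:=\hat e_{n}^{\,*}\circ\phi\in Y^{*}$, so that $f_{n}(e_{m})=\delta_{nm}$ and $\|y\|_{Y}=\sup_{n}|f_{n}(y)|$. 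Because $\|\cdot\|_{Y}$ and the restriction of $\|\cdot\|_{X}$ to $Y$ are equivalent, each $f_{n}$ extends by Hahn--Banach to some $F_{n}\in X^{*}$; for a constant $M>0$ I then define
$$\n3{x}\,:=\,\max\Bigl\{\sup_{n}|F_{n}(x)|,\;d(x,Y)/M\Bigr\},\qquad d(x,Y):=\inf_{y\in Y}\|x-y\|_{X}.$$

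First I would verify that $\n3{\cdot}$ is an equivalent norm on $X$: the upper bound is immediate from the boundedness of the $F_{n}$ and $d(x,Y)\le\|x\|_{X}$; the lower bound follows by approximating $x$ by some $y\in Y$ with $\|x-y\|_{X}\le d(x,Y)+\varepsilon$, estimating $|f_{n}(y)|\le|F_{n}(x)|+\|x-y\|_{X}\cdot\|F_{n}\|_{X^{*}}$, and invoking the equivalence $\|y\|_{X}\lesssim\|y\|_{Y}$ on $Y$. On $Y$ the distance term vanishes, so $\n3{y}=\|y\|_{Y}$, and $\phi$ exhibits $(Y,\n3{\cdot})$ as an isometric copy of $c$ inside $(X,\n3{\cdot})$.

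Next comes the sequence property. Given $x\in B_{(X,\n3{\cdot})}$, set $\alpha_{n}:=F_{n}(x)$; then $|\alpha_{n}|\le\sup_{k}|F_{k}(x)|\le\n3{x}\le 1$. Put $x_{n}:=x+(1-\alpha_{n})e_{n}$ and $y_{n}:=x-(1+\alpha_{n})e_{n}$. Because $F_{k}(e_{n})=\delta_{kn}$, $F_{n}(x_{n})=\alpha_{n}+(1-\alpha_{n})=1$ and $F_{k}(x_{n})=F_{k}(x)$ for $k\ne n$, so $\sup_{k}|F_{k}(x_{n})|=\max\bigl(1,\sup_{k\ne n}|F_{k}(x)|\bigr)\le 1$; and because $(1-\alpha_{n})e_{n}\in Y$, $d(x_{n},Y)=d(x,Y)\le M$. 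Hence $\n3{x_{n}}\le 1$, analogously $\n3{y_{n}}\le 1$, and trivially $\n3{x_{n}-y_{n}}=\n3{2e_{n}}=2\|e_{n}\|_{Y}=2$. Weak convergence $x_{n},y_{n}\to x$ follows because $\{\hat e_{n}\}$ is weakly null in $c$, hence $\{e_{n}\}$ is weakly null in $(Y,\|\cdot\|_{Y})\cong c$ and therefore in $X$ (equivalent norms share the weak topology, and every $G\in X^{*}$ restricts to $Y^{*}$), together with the boundedness $|1\mp\alpha_{n}|\le 2$.

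The main obstacle is settling on the right renorming. A natural ``inf''-type attempt such as $\inf_{y\in Y}\{\|y\|_{Y}+M\|x-y\|_{X}\}$ fails: when $\n3{x}=1$, any near-optimal $y$ has $\|y\|_{Y}$ close to $1$, leaving no slack for the size-$1$ perturbation $(1-\alpha_{n})e_{n}$ to keep $y+(1-\alpha_{n})e_{n}$ inside $B_{(Y,\|\cdot\|_{Y})}$. The ``max'' formula decouples the two ingredients: the identities $F_{n}(e_{n})=1$ and $F_{k}(e_{n})=0$ for $k\ne n$ pin $F_{n}(x_{n})$ to $1$ while leaving the other coordinates untouched, keeping $\sup_{k}|F_{k}(x_{n})|\le 1$; and translation by the element $(1-\alpha_{n})e_{n}\in Y$ leaves $d(\cdot,Y)$ invariant. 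A pleasant bonus is that the argument invokes no complementation of $Y$ in $X$ (no Sobczyk-type result is needed), so it works uniformly for separable and nonseparable $X$.
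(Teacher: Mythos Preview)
Your argument is correct and takes a genuinely different route from the paper's. The paper first renorms so that $c$ sits isometrically in $X$, then for each subspace $Y\supset c$ with $\dim Y/c<\infty$ invokes a Sobczyk--type projection $P_Y:Y\to c$ (with $\Vert P_Y\Vert\le 8$), defines $\Vert x\Vert_Y=\max\{\Vert P_Y x\Vert_\infty,\Vert x-P_Y x\Vert\}$, and obtains the final norm as an ultralimit $\lim_{\mathcal U}\Vert x\Vert_Y$ over a suitable ultrafilter on this family of subspaces; the numbers $\alpha_n$ arise as $\lim_{\mathcal U}e_n^*(P_Y x)$. Your construction replaces the projection\,+\,ultrafilter machinery by a single global formula built from Hahn--Banach extensions $F_n$ of the coordinate functionals and the distance to $Y$: the identity $F_k(e_n)=\delta_{kn}$ pins the $n$-th ``coordinate'' of $x_n$ to $1$ without disturbing the others, and translation by an element of $Y$ leaves $d(\cdot,Y)$ untouched, so both terms in the $\max$ stay $\le 1$. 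This is strictly more elementary---no appeal to complementation of $c_0$ in separable superspaces, no ultrafilter---and, as you observe, works verbatim for nonseparable $X$. What the paper's approach buys is a somewhat more intrinsic description (the norm is an honest limit of projection norms rather than depending on arbitrary Hahn--Banach choices), but for the purposes of the lemma and its use in Theorem~\ref{c0} your version is cleaner. One small point worth making explicit in your write-up: the extensions $F_n$ can be chosen with a uniform bound $\sup_n\Vert F_n\Vert_{X^*}<\infty$ (since the $f_n$ are uniformly bounded in $(Y,\Vert\cdot\Vert_X)^*$ by the equivalence of norms on $Y$), which is what makes $\sup_n|F_n(x)|$ finite and the equivalence-of-norms argument go through.
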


\begin{proof}As $X$ contains isomorphic copies of $c$, we can assume that $c$ is, in fact,
 an isometric subspace of $X$. Then for
every $Y$ separable subspace of $X$ containing $c$, there is a
linear and continuous projection $P_Y:Y\longrightarrow c$ with
$\Vert P\Vert\leq 8$. Indeed, let us consider the onto linear
isomorphism $T:c\longrightarrow c_0$ given by
$T(x)(1)=\frac{1}{2}\lim_n x(n)$ and
$T(x)(n)=\frac{1}{2}(x(n)-\lim_n x(n))$ for every $n>1$. Note that
$\Vert T\Vert =1$ and $\Vert T^{-1}\Vert=4$. Now, following
  \cite[Th. 5.14]{FHHMPZ}, we get the desired projection $P_Y$
  with $\Vert P_Y\Vert \leq 2\Vert T^{-1}\Vert =8$.

Let $\Upsilon$ be the family of subspaces $Y$ of $X$ containing
$c$ such that $c$ has finite codimension in $Y$. Consider the
filter basis $\Upsilon$ given by $\{Y\in \Upsilon: Y_0\subset
Y\}$, where $Y_0\in \Upsilon$ and call ${\mathcal U}$ the
ultrafilter containing the generated filter by the above filter
basis.

For every $Y\in \Upsilon$, we define a new norm in $X$ given by
$$\Vert x\Vert_Y:=\max\{\Vert P_Y(x)\Vert ,\Vert x-P_Y(x)\Vert\}.$$
Finally, we define the norm on $X$ given by $|\| x\| |
:=\lim_{\mathcal U}\Vert x\Vert_Y$. Observe that $\frac{1}{8}\Vert
x\Vert\leq |\| x\| |\leq 3\Vert x\Vert$ for every $x\in X$ and so
$|\|\cdot\| |$ is an equivalent norm in $X$ such that $|\| x\| |
=\Vert x\Vert_{\infty}$ for every $x\in c$, where
$\Vert\cdot\Vert_{\infty}$ is the sup norm in $c$. Hence $(X,|\|
\cdot\| |)$ contains an isometric copy of $c$.

Pick $x_0 \in B_{(X,|\| \cdot\| |)}$. In order to prove the
remaining statement let $\{e_n\}$ and $\{e_n^*\}$ the usual basis
of $c_0$ and the biorthogonal functionals sequence, respectively.

Chose $\lambda\in \real$ and $n\in \natu$. For every
$Y\in\Upsilon$ with $x_0\in Y$ we have that $$\Vert x_0+\lambda
e_n\Vert_Y =\max\{\Vert P_Y(x_0)+\lambda e_n\Vert ,\Vert
x_0-P_Y(x_0)\Vert\}=$$
$$\max\{\vert\lambda+e_n^*(P_Y(x_0))\vert ,\Vert
P_Y(x_0)-e_n^*(P_Y(x_0))e_n\Vert ,\Vert x_0-P_Y(x_0)\Vert\}.$$

Call  $\beta_n =\lim_{\mathcal U}\max\{ \Vert
P_Y(x_0)-e_n^*(P_Y(x_0))e_n\Vert ,\Vert x_0-P_Y(x_0)\Vert\}$
and\break $\alpha_n=\lim_{\mathcal U}e_n^*(P_Y(x_0))$. Then $\vert
\Vert x_0+\lambda e_n\vert \Vert=\max\{\vert \lambda
+\alpha_n\vert , \beta_n\}$. Note that $\vert \alpha_n\vert\leq 1$
and $\beta_n\leq 1$ since $|\| x_0\| |\leq 1$.

Doing $x_n:=x_0+(1-\alpha_n)e_n$ and $y_n:=x_0-(1+\alpha_n)e_n$
for every $n$, we get that $x_n$, $y_n\in B_{(X,|\| \cdot\| | )}$.
Finally, it is clear that $\{x_n\}$ and $\{y_n\}$ are weakly
convergent sequences to $x_0$ and $|\| x_n-y_n\| |=2$ for every
$n\in \natu$.\end{proof}

\begin{theorem}\label{c0} Let $X$ be a Banach space containing an isomorphic copy of
$c_0$. Then  there is an equivalent norm   in $X$ such
that:\begin{enumerate}\item[i)] Every slice of new unit ball of
$X$ has diameter $2$ for the new equivalent norm. \item[ii)] There
are nonempty relatively weakly open subsets of the new unit ball
in $X$ with diameter arbitrarily small for the new equivalent
norm.
\end{enumerate}\end{theorem}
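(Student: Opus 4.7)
The plan is to first apply Lemma \ref{lemac0} to obtain an equivalent norm $|\|\cdot\||$ on $X$ under which $c$ embeds isometrically and every $x\in B_{(X,|\|\cdot\||)}$ admits weakly convergent sequences $x_n=x+(1-\alpha_n)e_n$ and $y_n=x-(1+\alpha_n)e_n$ in $B_{(X,|\|\cdot\||)}$ with $|\|x_n-y_n\||=2$. Then the set $K\subset c$ from Propositions \ref{K} and \ref{Kw} sits inside $B_{(X,|\|\cdot\||)}$. Fix a small $\rho>0$ and define the new unit ball
\[
B:=\overline{\mathrm{co}}\bigl(K\cup\rho\, B_{(X,|\|\cdot\||)}\bigr),
\]
which is closed, convex, symmetric, bounded and absorbing, hence the unit ball of an equivalent norm $N$ on $X$, with $|\|\cdot\||\le N\le |\|\cdot\||/\rho$ and $\|\cdot\|_c\le N$ on $c$.

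For (i), consider a slice $S=\{x\in B:x^*(x)>\sup_B x^*-\delta\}$. Since $\sup_B x^*=\max\{\sup_K x^*,\rho\|x^*\|\}$, two cases arise. If $\sup_K x^*$ dominates, then for $\delta$ small $S$ meets $A$; the argument from Proposition \ref{K} produces, for any $x_\alpha\in S\cap A$, a sequence $x_{(\alpha,j)}\in A$ weakly convergent to $x_\alpha$, eventually in $S$, at $c$-distance $2$ and hence at $N$-distance at least $2$. If $\rho\|x^*\|$ dominates, then Lemma \ref{lemac0} applied to some $\rho u\in S$ with $u\in B_{(X,|\|\cdot\||)}$ yields $\rho u_n,\rho v_n\in\rho B_{(X,|\|\cdot\||)}\subset B$ weakly convergent to $\rho u$, eventually in $S$, with $|\|\rho u_n-\rho v_n\||=2\rho$. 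A careful Minkowski-functional analysis, exploiting the structural identity $\lim_k z(k)+z(1)=0$ satisfied by every $z\in K$, is then needed to conclude that the $N$-diameter of $S$ is exactly $2$.

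For (ii), extend via Hahn--Banach the functionals $e_{(0,i)}^*$ and $L(x)=\lim_k x(k)$ from $c^*$ to $X^*$. With $\rho_n<1/(n(32n-21))$ as in Proposition \ref{Kw}, define
\[
V_n:=\bigl\{x\in X:\tilde e_{(0,i)}^*(x)>1/n-1-\rho_n,\ 1\le i\le n,\ \tilde L(x)<-1+\rho_n\bigr\},
\]
a relatively weakly open subset of $X$. For $x\in V_n\cap B$ written as $x=\lambda k+(1-\lambda)z$ with $k\in K$ and $z\in\rho B_{(X,|\|\cdot\||)}$, the condition $\tilde L(x)<-1+\rho_n$ combined with $\tilde L(k)=-k(1)\ge -1$ and $|\tilde L(z)|\le\rho$ forces $1-\lambda$ to be of order $\rho_n$. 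Hence $N(x-k)=O(\rho_n/\rho)$ and $k$ lies in a mild enlargement of $W_n$, so Proposition \ref{Kw} yields $\mathrm{diam}_N(V_n\cap B)\to 0$ as $n\to\infty$.

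The main obstacle is verifying (i) in the second subcase: a direct computation gives $N(e_n)\le(1+\rho)/(2\rho)<1/\rho$, so the lemma's $\pm e_n$-pairs alone yield only $N$-distance $1+\rho<2$. Achieving diameter exactly $2$ requires exhibiting further pairs in $S$ differing in a direction outside $c$ (where the Minkowski functional equals $|\|\cdot\||/\rho$), using the D2P of $B_{(X,|\|\cdot\||)}$ more robustly than via the explicit $\pm e_n$-pairs, and possibly a minor adjustment of the generating set of $B$. Balancing this against the sharp estimate in (ii) is the technical heart of the argument.
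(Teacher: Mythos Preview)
Your overall architecture---renorm via Lemma~\ref{lemac0}, then take a closed convex hull of $K$ with a shrunken copy of $B_X$, and handle (ii) by Hahn--Banach extending the functionals defining $W_n$---matches the paper's. The treatment of (ii) is essentially right: the condition on $\tilde L$ forces $1-\lambda$ small, so every point of $V_n\cap B$ is close to some $k\in K$ lying in a slight enlargement of $W_n$, and Proposition~\ref{Kw} finishes.

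The gap is exactly where you flag it, and it is not minor. With your ball $B=\overline{\mathrm{co}}(K\cup\rho B_{(X,|\|\cdot\||)})$, observe that for every $n>1$ one has $e_n=\tfrac12 x_{(\alpha,j)}+\tfrac12(-x_\alpha)\in K$ whenever $\phi((\alpha,j))=n$, so in fact $N(e_n)=1$. Hence the pairs $\rho u_n,\rho v_n$ from Lemma~\ref{lemac0} satisfy $N(\rho u_n-\rho v_n)=N(2\rho e_n)=2\rho$, not $2$. Nothing in Lemma~\ref{lemac0} produces perturbations in directions outside $c$, and the identity $\lim_k z(k)+z(1)=0$ on $K$ does not help here: it constrains $K$, not the Minkowski functional in the $e_n$ direction, which is already pinned down by $e_n\in K$. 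So in the subcase where the slice misses $K$ entirely (which does occur: take $x^*$ vanishing on $c$ with $\|x^*\|=1$ and $\delta<\rho$), your ball genuinely fails SD2P.

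The paper's fix is precisely to replace $\rho B_X$ by the Minkowski sum $(1-\varepsilon)B_X+\varepsilon B_{c_0}$ in the generating set. Then a slice meeting this set contains some $(1-\varepsilon)x_0+\varepsilon y_0$ with $y_0$ of finite support; passing to $(1-\varepsilon)x_j+\varepsilon(y_0+e_j)$ and $(1-\varepsilon)y_j+\varepsilon(y_0-e_j)$ (using the lemma's $x_j,y_j$) gives two points in the slice whose difference is $(1-\varepsilon)(x_j-y_j)+2\varepsilon e_j=2(1-\varepsilon)e_j+2\varepsilon e_j=2e_j$, and $\|2e_j\|_\varepsilon\ge\|2e_j\|=2$. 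The $\varepsilon B_{c_0}$ summand is what realigns the perturbation so that the $(1-\varepsilon)$ scaling is exactly compensated; this is the missing idea.
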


\begin{proof}
From the above lemma, we can assume that $X$ contains an isometric
copy of $c$ and  for every $x\in B_{X}$ there are sequences
$\{x_n\}$, $\{y_n\}\in B_{X}$ weakly convergent to $x$ such that $
\Vert x_n -y_n\Vert =2$ for every $n\in \natu$.

Fix $0<\varepsilon<1$ and consider in $X$ the equivalent norm
$\Vert \cdot \Vert_{\varepsilon}$ whose unit ball is
$B_{\varepsilon}=\overline{co}(A\cup -A\cup [(1-\varepsilon)
B_X+\varepsilon B_{c_0}])$. Then we have $\Vert
x\Vert_{\varepsilon}\leq \frac{1}{1-\varepsilon}\Vert x\Vert$ for
every $x\in X$ and $\Vert x\Vert=\Vert x\Vert_{\infty}$ for every
$x\in c$.

In order to prove $ii)$, fix $\gamma >0$. Pick $n\in \natu$ with
$18<n (1-\varepsilon) \gamma$ and choose $\rho $ such that $0<\rho
<\frac{1}{n(32n-21)}$, $2\rho < \gamma$ and $2\rho <\varepsilon$.
Consider the relative weak open subset of $K$ given by
$$W_n=\{x\in K:e_{(0,i)}^*(x)>\frac{1}{n}-1-\rho,\ 1\leq i\leq n ,\
\lim_n x(n)<-1+\rho\}.$$ From Proposition \ref{Kw},
$W_n\neq\emptyset$ and $diam_{\Vert\cdot\Vert_{\infty}}(W_n)\leq
9/n$.

Now, we define $$W=\{x\in
B_{\varepsilon}:e_{(0,i)}^*(x)>\frac{1}{n}-1-\frac{1}{2}\rho ,\
1\leq i\leq n,\ \lim_n (x)<-1+\rho ^2 \},$$ where $e_n^*$ and
$\lim_n$ denote the Hanh-Banach extensions to $X$ of the
corresponding functionals on $c$. It is clear that $\Vert
e_{(0,i)}^*\Vert_{\varepsilon}=\Vert e_{(0,i)}^*\Vert =1$ for
every $i=1,\ldots , n$ and $\Vert \lim_n\Vert_{\varepsilon}=\Vert
\lim_n \Vert =1$.

We prove that $x_0=\sum_{i=1}^n \frac{x_{(0,i)}}{n}\in W$. For
this note that $\lim_n x_{(0,i)}(n)=-1$ and then $\lim_n
x_0(n)=-1<-1+\rho ^2$. Furthermore $x_0$ is a convex combination
of elements of $A$ and so $x_0\in B_\varepsilon$. Finally, for $
1\leq j\leq n$, one has that
$$e_{(0,j)}^*(x_0)=\sum_{i=1\ i\neq j}^n
\frac{1}{n}e_{(0,j)}^*(x_{(0,i)})+\frac{1}{n}
e_{(0,j)}^*(x_{(0,j)})=-\frac{n-1}{n}+\frac{1}{n}>\frac{1}{n}-1-\frac{1}{2}\rho.$$

Then $W$ is a nonempty relative weak open subset of
$B_{\varepsilon}$. In order to estimate the diameter of $W$, it is
enough compute the diameter of $W\cap co(A\cup -A\cup
[(1-\varepsilon) B_X+\varepsilon B_{c_0}])$. Furthermore,
$co(A\cup -A\cup [(1-\varepsilon) B_X+\varepsilon
B_{c_0}])=co(co(A)\cup co(-A)\cup [(1-\varepsilon) B_X+\varepsilon
B_{c_0}])$. So, given $x\in W$, we can assume that $x=\lambda
_1a+\lambda _2 (-b)+\lambda _3 [(1-\varepsilon )x_0+\varepsilon
y_0]$, where $\lambda _i\in [0,1]$ with $\sum_{i=1}^3\lambda_i=1$
and $a,b\in co(A)$, $x_0\in B_X$, and $y_0\in B_{c_o}$. Since
$x\in W$, we have that $\lim_n(x)<-1+\rho ^2$, and
hence,$$-\lambda _1+\lambda _2 +\lambda _3(1-\varepsilon
)\lim_n(x_0)= -\lambda _1+\lambda _2 +\lambda
_3\lim_n[(1-\varepsilon )x_0+\varepsilon y_0]<-1+\rho ^2.$$ Note
that $-1\leq \lim_n(x_0)$. This implies that
$$2\lambda _2+\lambda _3\varepsilon -1= -\lambda _1+\lambda _2
-\lambda _3(1-\varepsilon )<-1+\rho ^2.$$ Since $2\rho
<\varepsilon$, and so $\lambda _2+\lambda_3<\frac{1}{2}\rho$. As a
consequence we get that $\lambda_1>1-\frac{\rho}{2}$.

If $i\in\{1,\ldots ,n\}$ then
$$\frac{1}{n}-1-\frac{1}{2}\rho <e_{(0,i)}^*(x)=\lambda_1 e_{(0,i)}^*(a)+\lambda_2e_{(0,i)}^*(-b)+
\lambda _3e_{(0,i)}^*[(1-\varepsilon )x_0+\varepsilon y_0] .$$
Since $\Vert e_{(0,i)}^*\Vert _\varepsilon =1$ and
$-b,(1-\varepsilon )x_0+\varepsilon y_0\in B_\varepsilon$, we have
that $e_{(0,i)}^*(-b)\leq 1$ and $e_{(0,i)}^*[(1-\varepsilon
)x_0+\varepsilon y_0]\leq 1$. It follow that $$\lambda_1
e_{(0,i)}^*(a)+\lambda_2e_{(0,i)}^*(-b)+\lambda
_3e_{(0,i)}^*[(1-\varepsilon )x_0+\varepsilon y_0]\leq
$$$$\lambda_1 e_{(0,i)}^*(a)+\lambda_2+\lambda_3<\lambda_1
e_{(0,i)}^*(a)+\frac{1}{2}\rho .$$ We deduce that
$$e_{(0,i)}^*(\lambda_1 a)>\frac{1}{n}-1-\rho $$  for $1\leq i\leq n$. On the other hand, we have that
$$\lim_n(\lambda_1a)=-\lambda_1<-1+\frac{\rho}{2}<-1+\rho ,$$ and we
conclude that  $\lambda_1a\in W_n$.

Finally, given $x,x'\in W $, we can assume that
$$x=\lambda _1a+\lambda _2 (-b)+\lambda _3 [(1-\varepsilon )x_0+\varepsilon y_0], \ x'=\lambda _1'a'+\lambda _2' (-b')
+\lambda _3'[(1-\varepsilon )x_0'+\varepsilon y_0'],$$ where
$\lambda _i, \lambda _i'\in [0,1]$ with
$\sum_{i=1}^3\lambda_i=\sum_{i=1}^3\lambda_i'=1$, and
$a,b,a',b'\in co(A)$, $x_0,x_0'\in B_X$ and $y_0,y_0'\in B_{c_0}$.
We have that
$$\Vert x-x'\Vert_{\varepsilon}\leq \Vert
\lambda _1a-\lambda _1'a'\Vert_{\varepsilon}+
\lambda_2+\lambda_3+\lambda_2'+\lambda_3'<\Vert \lambda
_1a-\lambda _1'a'\Vert_{\varepsilon}+\rho .$$  We recall that,
$\Vert x\Vert_{\varepsilon}\leq \frac{1}{1-\varepsilon}\Vert
x\Vert$ for every $x\in X$ and $\Vert x\Vert=\Vert
x\Vert_{\infty}$ for every $x\in c$, and that
$\lambda_1a,\lambda_1'a'\in W_n$. Then
$$\Vert x-x'\Vert_{\varepsilon}\leq \frac{1}{1-\varepsilon}\Vert
\lambda_1a-\lambda_1'a'\Vert _\infty +\rho \leq
\frac{9}{n(1-\varepsilon )}+\rho\leq \gamma .$$  Hence
$diam_{\Vert\cdot\Vert_{\varepsilon}}(W)\leq \gamma.$

In order to prove i), note that $B_{\varepsilon}\subset B_X$ and
so $\Vert x\Vert_{\varepsilon} \geq \Vert x\Vert$ for every $x\in
X$.

Pick now $f\in X^*$, $\Vert f\Vert_{\varepsilon}^*=1$ and
$\beta>0$ and consider the slice $$S=\{x\in
B_{\varepsilon}:f(x)>1-\beta \}.$$

Hence there are $a\in A\cup -A$ or $(1-\varepsilon
)x_0+\varepsilon y_0\in (1-\varepsilon )B_X+\varepsilon B_{c_0}$
such that $a\in S$ or $(1-\varepsilon )x_0+\varepsilon y_0\in S$.

From the symmetry of $A\cup -A$, we can assume that $a\in A$, so
there is $\alpha\in \natu^{<\omega}$ such that $a=x_{\alpha}$. We
recall that  $x_{(\alpha,j)}(k)=1$ if $\phi^{-1}(k)\leq (\alpha,
j)$ and $x_{(\alpha,j)}(k)=-1$ in otherwise, then $\{x_{(\alpha
,j)}\}_j$ is a weakly convergent sequence to $x_{\alpha}$. Hence
we can choose $j$ so that $x_{(\alpha ,j)}\in S$. Note that
$x_{(\alpha ,j)}-x_{\alpha}= 2e_{(\alpha ,j)}$, then $2=\Vert
2e_{(\alpha ,j)}\Vert _\infty =\Vert x_{(\alpha
,j)}-x_{\alpha}\Vert \leq \Vert x_{(\alpha ,j)}-x_{\alpha}\Vert
_\varepsilon$. It follow that
$diam_{\Vert\cdot\Vert_{\varepsilon}}(S)= 2$.

In the case that there is $x_0\in B_X$  and $y_0\in B_{c_0}$ such
that $$(1-\varepsilon )x_0+\varepsilon y_0\in S,$$ as $S$ is a
norm open set, we can assume that $y_0$ has finite support. From
the above lemma, there is a scalars sequence $\{t_j\}$ with $\vert
t_j\vert \leq 1$ for every $j$ such that, putting
$x_j=x_0+(1-t_j)e_j$ and $y_j=x_0-(1+t_j)e_j$ for every $j$, we
have that $\{x_j\}$ and $\{y_j\}$ are weakly convergent sequences
in $B_X$ to $x_0$. We put $j_0$ such that $e_{j}^*(y_0)=0$ for
every $j\geq j_0$, then $y_0+e_j,y_0-e_j\in B_{c_0}$ for every
$j\geq j_0$.

So it follows that $\{ (1-\varepsilon )x_j+\varepsilon
(y_0+e_j)\}_{j\geq j_0}$ and $\{(1-\varepsilon )y_j+\varepsilon
(y_0-e_j)\}_{j\geq j_0}$ are sequences in $(1-\varepsilon
)B_{\varepsilon}+\varepsilon B_{c_0}\subset B_\varepsilon$ weakly
convergent to $(1-\varepsilon )x_0+\varepsilon y_0$. Hence we can
chose $j$ so that $(1-\varepsilon )x_j+\varepsilon (y_0+e_j)$, $
(1-\varepsilon )y_j+\varepsilon (y_0-e_j)\in S$. Then
$$\Vert [(1-\varepsilon )x_j+\varepsilon (y_0+e_j)]-[(1-\varepsilon )y_j+\varepsilon (y_0-e_j)]\Vert _\varepsilon
=$$ $$\Vert 2(1-\varepsilon)e_j+2\varepsilon e_j\Vert _\varepsilon
=\Vert 2e_j\Vert _\varepsilon \geq \Vert 2e_j\Vert =\Vert
2e_j\Vert _\infty =2,$$ and
$diam_{\Vert\cdot\Vert_{\varepsilon}}(S)=2$.
\end{proof}

As a consequence of the above result we have the following
corollary, which answers by the negative the problem about the
equivalence between SD2P and D2P.

\begin{corollary}\label{corc0} Every Banach space containing an isomorphic copy of $c_0$
can be equivalently renormed  satisfying SD2P and failing
D2P.\end{corollary}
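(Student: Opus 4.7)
The plan is to observe that Corollary \ref{corc0} is essentially a direct restatement of Theorem \ref{c0} using the terminology SD2P and D2P introduced in the Introduction, so the proof should be a one-line deduction rather than a new argument.

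First I would recall the definitions: a Banach space $Y$ has the SD2P if every slice of $B_Y$ has diameter $2$, and has the D2P if every nonempty relatively weakly open subset of $B_Y$ has diameter $2$. Given a Banach space $X$ containing an isomorphic copy of $c_0$, I would apply Theorem \ref{c0} to obtain an equivalent norm $\Vert\cdot\Vert_\varepsilon$ on $X$ satisfying conditions (i) and (ii) of that theorem.

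Condition (i) says that every slice of $B_{(X,\Vert\cdot\Vert_\varepsilon)}$ has diameter $2$, which is precisely the definition of SD2P for the renormed space. Condition (ii) provides a nonempty relatively weakly open subset of $B_{(X,\Vert\cdot\Vert_\varepsilon)}$ with diameter arbitrarily small; in particular, there exists a nonempty relatively weakly open subset whose diameter is strictly less than $2$, which contradicts D2P. Hence the renormed space $(X,\Vert\cdot\Vert_\varepsilon)$ satisfies SD2P and fails D2P, proving the corollary.

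There is no real obstacle here, since all the work has already been done in Theorem \ref{c0}. The only thing to verify is the identification of the two conclusions of the theorem with the property SD2P and the negation of D2P, respectively, which follows immediately from the definitions recalled in the Introduction.
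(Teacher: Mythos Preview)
Your proposal is correct and matches the paper's approach exactly: the paper presents Corollary~\ref{corc0} with no separate proof, simply stating it ``as a consequence of the above result,'' and your argument spells out precisely this deduction from Theorem~\ref{c0} via the definitions of SD2P and D2P.
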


Our final result establishes  a stability  property of Banach
spaces with SD2P and failing D2P by $\ell_2$-sums.

\begin{corollary}\label{corfin}  The $\ell_2$-sum of Banach spaces with SD2P and failing D2P also has SD2P and fails
D2P.
\end{corollary}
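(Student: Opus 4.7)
The plan is to reduce by induction to the case of a two-summand sum $X\oplus_2 Y$ in which both $X$ and $Y$ satisfy SD2P and fail D2P, and then verify each of the two properties in turn, exploiting the duality $(X\oplus_2 Y)^*=X^*\oplus_2 Y^*$.

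For SD2P, I would begin with an arbitrary slice of $B_{X\oplus_2 Y}$ defined by a functional $(x^*,y^*)\in X^*\oplus_2 Y^*$ normalized so that $\|x^*\|^2+\|y^*\|^2=1$, which makes the supremum of $(x^*,y^*)$ on the unit ball equal to $1$ and presents the slice as $S=\{(x,y)\in B_{X\oplus_2 Y}:x^*(x)+y^*(y)>1-\alpha\}$. The degenerate cases $x^*=0$ or $y^*=0$ reduce directly to SD2P of the non-vanishing factor. When $a:=\|x^*\|$ and $b:=\|y^*\|$ are both positive, the idea is to apply SD2P of $X$ to the slice $\{\tilde x\in B_X:(x^*/a)(\tilde x)>1-\delta\}$ to produce $\tilde x_1,\tilde x_2$ with $\|\tilde x_1-\tilde x_2\|>2-\delta$, and analogously obtain $\tilde y_1,\tilde y_2$ in the corresponding slice of $B_Y$. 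For $\delta<\alpha$, both points $(a\tilde x_j,b\tilde y_j)$, $j=1,2$, lie in $S$, and
$$\|(a\tilde x_1,b\tilde y_1)-(a\tilde x_2,b\tilde y_2)\|_2=\sqrt{a^2\|\tilde x_1-\tilde x_2\|^2+b^2\|\tilde y_1-\tilde y_2\|^2}>2-\delta,$$
using $a^2+b^2=1$. Letting $\delta\to 0$ yields $\mathrm{diam}(S)=2$.

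For the failure of D2P, I would use the rigidity of the $\ell_2$-norm near the unit sphere. Any space with SD2P is infinite dimensional, so since $X$ fails D2P there is a relatively weakly open subset $W_X=U\cap B_X$ of arbitrarily small diameter, and by the observation recalled in the introduction we may pick $x_0\in W_X\cap S_X$ together with $x^*\in S_{X^*}$ satisfying $x^*(x_0)=1$. Consider
$$W=\{(x,y)\in B_{X\oplus_2 Y}:x\in U,\ x^*(x)>1-\delta\},$$
which is relatively weakly open (lift $U$ through the weakly continuous projection $P_X:X\oplus_2 Y\to X$ and observe that $x^*\circ P_X\in(X\oplus_2 Y)^*$) and nonempty because $(x_0,0)\in W$. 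For any $(x,y)\in W$, $x^*(x)>1-\delta$ forces $\|x\|>1-\delta$, and the $\ell_2$-constraint $\|x\|^2+\|y\|^2\leq 1$ then gives $\|y\|\leq\sqrt{2\delta-\delta^2}$; since $\|x\|\leq 1$ we also have $x\in W_X$. Therefore $\mathrm{diam}(W)\leq\mathrm{diam}(W_X)+2\sqrt{2\delta}$, which can be made arbitrarily small.

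The main obstacle is the D2P-failure part: the naive attempt of lifting $W_X$ through the projection $P_X$ leaves the second coordinate uncontrolled and produces a set of diameter $2$. The remedy is the auxiliary norm-controlling constraint $x^*(x)>1-\delta$, which leverages the strict convexity of the $\ell_2$-sum to squeeze the $y$-coordinate into a small ball. The SD2P direction, by contrast, is essentially a bookkeeping matter once the slicing functional is split appropriately between the two factors.
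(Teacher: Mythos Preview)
Your two-summand arguments are sound, but two points need attention.

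First, a misstatement: from ``$X$ fails D2P'' you only get some nonempty relatively weakly open $W_X\subset B_X$ with $\mathrm{diam}(W_X)<2$, not one of arbitrarily small diameter. Your bound $\mathrm{diam}(W)\leq\mathrm{diam}(W_X)+2\sqrt{2\delta}$ still gives $\mathrm{diam}(W)<2$ once $\delta<\tfrac{1}{8}(2-\mathrm{diam}(W_X))^2$, so the conclusion ``$X\oplus_2 Y$ fails D2P'' survives; but the stronger claim ``arbitrarily small'' does not follow from the stated hypothesis.

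Second, the paper intends countable sums $Z=\ell_2\text{-}\bigoplus_n X_n$, and induction to two summands does not reach that case. For failure of D2P this is harmless: write $Z=X_1\oplus_2\bigl(\ell_2\text{-}\bigoplus_{n\geq 2}X_n\bigr)$ and apply your argument, which only uses that the \emph{first} factor fails D2P. For SD2P, however, your two-summand argument requires SD2P of \emph{both} factors, so induction only gives the finite truncations; one then passes to $Z$ via the paper's projection step: given a slice $S=\{z\in B_Z:f(z)>1-\alpha\}$ and $z_0\in S\cap S_Z$, choose $k$ so that $P_k^*(f)(P_k(z_0))>1-\alpha$, find two far-apart points in the induced slice of $\ell_2\text{-}\bigoplus_{i\leq k}X_i$ (your iterated two-summand computation does this), and embed them back into $B_Z$.

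On the comparison: for SD2P the paper simply quotes the finite-sum stability from \cite{AcBeLo} rather than arguing directly as you do. For the small-weak-open-set part, the paper's Lemma~\ref{lemfin} works under a formally stronger hypothesis---each $B_{X_n}$ contains a weakly open $U_n$ of diameter $<\varepsilon_n$ with $\varepsilon_n\to 0$---so that $\|x\|>1-\varepsilon_n$ on $U_n$ comes for free from $U_n\cap S_{X_n}\neq\emptyset$, and no auxiliary norming functional is needed. Your norming-functional device is actually more flexible: it yields failure of D2P in the sum from mere failure of D2P in a single factor, which the paper's lemma does not literally cover.
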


The proof of the above Corollary is an immediate consequence  of
the next lemma, which give us the stability of SD2P and small weak
open subsets for $\ell_2$-sums. In fact, this stability is also
true for $\ell_p$-sums, whenever $1\leq p<\infty$.

\begin{lemma}\label{lemfin} Let $\{X_n\}$ be a sequence of Banach spaces and
let $\{\varepsilon_n\}$ be a sequence of positive real numbers
with $\lim_n\varepsilon_n =0$. Assume that, for every $n\in
\natu$,  $X_n$  has SD2P and  $B_{X_n}$ contains a nonempty
relatively weakly open subset with diameter less than
$\varepsilon_n$. Then $\ell_2 -\bigoplus_n X_n$ has the SD2P and
the unit ball of $\ell_2 -\bigoplus_n X_n$ contains nonempty
relatively weakly open subsets with diameter arbitrarily
small.\end{lemma}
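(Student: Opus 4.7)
The plan is to address the two assertions of the lemma (that $X:=\ell_2\text{-}\bigoplus_n X_n$ has the SD2P, and that $B_X$ contains relatively weakly open subsets of arbitrarily small diameter) separately, with the latter being the simpler.

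For the small weakly open subsets, given $\gamma>0$, I would pick $n_0$ large enough that $\varepsilon_{n_0}$ is much smaller than $\gamma$ and let $W_{n_0}\subseteq B_{X_{n_0}}$ be a relatively weakly open subset of diameter less than $\varepsilon_{n_0}$. By the observation in the introduction that relatively weakly open subsets of the unit ball always meet the unit sphere in infinite dimensions, $W_{n_0}\cap S_{X_{n_0}}\neq\emptyset$; pick $y^0_{n_0}$ in this intersection and a functional $f_{n_0}\in S_{X_{n_0}^*}$ with $f_{n_0}(y^0_{n_0})=1$. Letting $f\in S_{X^*}$ be given by $f(x):=f_{n_0}(x_{n_0})$, consider
$$W:=\{x\in B_X:\ x_{n_0}\in W_{n_0},\ f(x)>1-\delta\}.$$
This is relatively weakly open in $B_X$ and contains the vector that equals $y^0_{n_0}$ in coordinate $n_0$ and zero elsewhere. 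The slice condition $f(x)>1-\delta$ forces $\|x_{n_0}\|>1-\delta$, hence $\sum_{n\neq n_0}\|x_n\|^2<2\delta$, and combined with $\|x_{n_0}-x'_{n_0}\|<\varepsilon_{n_0}$ this yields $\|x-x'\|^2<\varepsilon_{n_0}^2+8\delta$, which can be made less than $\gamma^2$ by choosing $\delta$ small.

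For the SD2P, fix a slice $S=\{x\in B_X:f(x)>1-\alpha\}$ with $f\in S_{X^*}$ and pick $x^0\in S$ with $\|x^0\|=1$ (rescale if necessary); set $\eta:=f(x^0)-(1-\alpha)>0$. Since $\sum_n\|x^0_n\|^2=1$, I would choose a finite set $F$ of indices with $\|x^0_n\|>0$ and $\sum_{n\in F}\|x^0_n\|^2>1-\epsilon$. For each $n\in F$, applying the SD2P of $X_n$ to the slice $\{y\in B_{X_n}:f_n(y)>f_n(x^0_n/\|x^0_n\|)-\delta\}$ of $B_{X_n}$ yields $u_n,v_n$ in it with $\|u_n-v_n\|>2-\delta$. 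Finally define $x^1,x^2\in X$ by $x^i_n:=\|x^0_n\|u_n$ or $\|x^0_n\|v_n$ for $n\in F$ and $x^i_n:=x^0_n$ for $n\notin F$. Then $\|x^i\|\leq 1$, the Cauchy--Schwarz estimate $\sum_{n\in F}\|x^0_n\|\leq\sqrt{|F|}$ gives $f(x^i)>f(x^0)-\delta\sqrt{|F|}>1-\alpha$ when $\delta<\eta/\sqrt{|F|}$, and
$$\|x^1-x^2\|^2=\sum_{n\in F}\|x^0_n\|^2\|u_n-v_n\|^2\geq(1-\epsilon)(2-\delta)^2,$$
which tends to $4$ as $\epsilon,\delta\to 0$. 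Hence $\mathrm{diam}(S)=2$.

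The main obstacle is the coupling of parameters in the SD2P part: one must first fix $x^0$, then choose $F$ depending on $x^0$ so that almost all the $\ell_2$-mass sits in $F$, and only afterwards pick $\delta$ small enough both relative to $|F|$ (to keep the two modified points inside $S$) and to make them nearly antipodal. The same scheme extends to $\ell_p$-sums for $1\le p<\infty$ with H\"older's inequality in place of Cauchy--Schwarz.
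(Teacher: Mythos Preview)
Your proof is correct. The two parts differ in how closely they track the paper.

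\textbf{Small relatively weakly open subsets.} Your argument is essentially the paper's, with one redundant ingredient. The paper simply takes $V_m=\{x\in B_Z:x_m\in U_m\}$ and observes that, since $U_m$ meets $S_{X_m}$ (each $X_m$ has SD2P, hence is infinite-dimensional) and has diameter below $\varepsilon_m$, every $x\in V_m$ already satisfies $\|x_m\|>1-\varepsilon_m$. Your additional slice condition $f(x)>1-\delta$ accomplishes the same thing and is harmless, but you could drop it and still get the estimate directly from $x_{n_0}\in W_{n_0}$.

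\textbf{SD2P.} Here your route is genuinely different. The paper reduces to a finite $\ell_2$-sum: it picks $z_0\in S\cap S_Z$, finds $k$ with $P_k^*(f)(P_k(z_0))>1-\alpha+\varepsilon$, and then invokes the known stability result that a finite $\ell_2$-sum of spaces with SD2P again has SD2P (\cite[Theorem~2.4]{AcBeLo}) to obtain $y_1,y_2$ in the induced slice of $B_{\ell_2\text{-}\bigoplus_{i\le k}X_i}$ with $\|y_1-y_2\|>2-\rho$; these embed back into $S$. Your argument is self-contained: you perturb $x^0$ coordinatewise on a finite set $F$ carrying almost all the $\ell_2$-mass, using the SD2P of each $X_n$ directly, and control membership in $S$ via the Cauchy--Schwarz bound $\sum_{n\in F}\|x^0_n\|\le\sqrt{|F|}$. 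The paper's approach is shorter because it outsources the combinatorics to \cite{AcBeLo}; yours avoids the external reference at the price of tracking the parameter dependencies $(\epsilon\to F\to\delta)$ explicitly, which you do correctly. As you note, the same scheme works for $\ell_p$-sums with H\"older replacing Cauchy--Schwarz, matching the paper's remark preceding the lemma.
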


\begin{proof} For every $n\in \natu$ let $U_n$ be a nonempty
relatively weakly open subset of $B_{X_n}$ with diameter less than
$\varepsilon_n$. Then $\Vert x\Vert >1-\varepsilon_n$ for every
$x\in U_n$. Call $Z=\ell_2 -\bigoplus_n X_n$ and define, for $m\in
\natu$, $V_m=\{\{x_n\}\in B_Z: x_m\in U_m\}$. Now $V_m$ is a
relative weak open subset of $B_Z$. For fixed $m$, as $\Vert
x_m\Vert>1-\varepsilon_m$ we have that $\sum_{n=1\ n\neq
m}^{\infty}\Vert x_n\Vert^2\leq 1-(1-\varepsilon_m)^2$ for every
$\{x_n\}\in V_m$.

Pick $\{x_n\},\{y_n\}\in V_m$. So $\Vert x_m -y_m\Vert
<\varepsilon_m$, since $diam(U_m)<\varepsilon_m$ and then
$$\Vert\{x_n\}-\{y_n\}\Vert^2=\sum_{n=1}^{\infty}\Vert
x_n-y_n\Vert^2=$$
$$\sum_{n=1\ n\neq m}^{\infty}\Vert
x_n-y_n\Vert^2+\Vert x_m-y_m\Vert^2\leq$$
$$\sum_{n=1\ n\neq m}^{\infty}(\Vert
x_n\Vert+\Vert y_n\Vert)^2+\Vert x_m -y_m\Vert^2<$$
$$\sum_{n=1\ n\neq m}^{\infty}\Vert
x_n\Vert^2+\sum_{n=1 n\neq m}^{\infty}\Vert y_n\Vert^2+2\sum_{n=1
n\neq m}^{\infty}\Vert x_n\Vert \Vert y_n\Vert +\varepsilon_m^2
\leq$$
$$2(1-(1-\varepsilon_m)^2)+2(\sum_{n=1\ n\neq m}^{\infty}\Vert
x_n\Vert^2)^{1/2}(\sum_{n=1\ n\neq m}^{\infty}\Vert
y_n\Vert^2)^{1/2}+\varepsilon_m^2\leq$$
$$4(1-(1-\varepsilon_m)^2)+\varepsilon_m^2,$$
and $diam(V_m)\leq
(4(1-(1-\varepsilon_m)^2)+\varepsilon_m^2)^{1/2}$. As
$\lim_m\varepsilon_m=0$, we conclude that $B_Z$ has nonempty
relatively weakly open subsets with diameter arbitrarily small.

We pass now to prove that $Z$ has SD2P. Take $f\in S_Z$,
$0<\alpha<1$ and consider an arbitrary slice of $B_Z$ $$S=\{z\in
B_Z:f(z)>1-\alpha\}.$$ Pick $z_0\in S_Z\cap S$, then choose
$0<\varepsilon<\alpha$ so that $f(z_0)>1-\alpha+\varepsilon .$

We denotes by $P_n$ the projection of $Z$ onto $\ell_2
-\bigoplus_{i=1}^n X_i$, which is a norm one projection for every
$n\in \natu$. As $f(z_0)>1-\alpha+\varepsilon$, there is
$k\in\natu$ such that $P_k^*(f)(P_k(z_0))>1-\alpha+\varepsilon$,
where $P_k^*$ denotes the transposed  projection of $P_k.$

Consider the slice of the unit ball in $Y=\ell_2
-\bigoplus_{i=1}^k X_i$ given by $T=\{y\in
B_Y:P_k^*(f)(y)>1-\alpha+\varepsilon\}.$ In order to prove that
$diam(S)=2$, fix $\rho>0$ and take $y_1,y_2\in B_Y$ such that
$\Vert y_1 -y_2\Vert>2-\rho$. This is possible, because it is
known that the finite $\ell_2$-sum of Banach spaces with SD2P has
too SD2P \cite[Theorem 2.4]{AcBeLo}. Now we see $y_1, y_2$ as
elements in Z, via the natural isometric embedding of $Y$ into
$Z$, and we have that $y_1 ,y_2 \in S$ with $\Vert y_1 -y_2\Vert_Z
>2-\rho$, hence $diam(S)\geq 2-\rho$. As $\rho$ was arbitrary, we
conclude that $diam(S)=2$.\end{proof}

Now the proof of Corollary \ref{corfin} is complete.

It would be interesting to know if there is some Banach space with
PCP and SD2P.

On the other hand, there is a stronger property than D2P, the
strong diameter two property: a Banach space $X$ satisfies the
strong diameter two property (strong D2P) if every convex
combination of slices in the unit ball of $X$ has diameter 2. Its
clear that strong D2P implies D2P, and it is known that these two
properties are in fact different \cite{AcBeLo}. Indeed, in
\cite[Theorem 3.2]{AcBeLo}
  is proved   that $c_0\oplus_2 c_0$ is a Banach space with D2P and
failing strong D2P. The failure of strong D2P of $c_0\oplus_2 c_0$
is shown finding out an average of two slices in the unit ball
with diameter strictly less than 2, however the unit ball of
 $c_0\oplus_2 c_0$ has no convex combination of
slices with diameter arbitrarily small. In fact,  it is not
difficult to check that  the diameter of any convex combination of
slices in the unit ball of $c_0\oplus_2 c_0$ is at least $1$. So
it would be interesting to know if there is some Banach space with
D2P and so that its unit ball contains convex combinations of
slices with diameter arbitrarily small.

\bigskip


\begin{thebibliography}{999999}

\bibitem{ALN}
T.A. Abrahansen,  V. Lima and O. Nygaard,  {\it Remarks on
diameter two properties},
 J. Convex Anal., to appear.
 %
\bibitem{AcBeLo}
M.D. Acosta, J. Becerra Guerrero and G. L\'opez-P\'erez,
\textit{Stability results on diameter two properties,} J. Conv.
Anal. (to appear).
%
\bibitem{BeLo}
J. Becerra-Guerrero and G. L{\'o}pez-P{\'e}rez, {\it Relatively weakly
open subsets of the unit ball in functions spaces,} J. Math. Anal.
Appl. {\bf 315} (2) (2006), 544--554.
%
\bibitem{BLR}
 J. Becerra-Guerrero, G. L\'{o}pez-P\'{e}rez and A.
Rodr\'{\i}guez-Palacios, \textit{Relatively weakly open sets in
closed balls of $C^*$-algebras,} J. London Math. Soc. \textbf{68}
(2003), 753--761.
%
\bibitem{BB} J. Bourgain. {\it La propri{\'e}t{\'e} de Radon-Nikodym.}
Publications Mathematiques de l'Universit{\'e} Pierre et Marie Curie.
No. {\bf 36} (1979).
%
\bibitem{BR}
J. Bourgain, H. P. Rosenthal. {\it Geometrical implications of
certain finite dimensional decompositions}. Bull. Belg. Math. Soc.
Simon Steven {\bf 32} (1980), 54--75.
%
\bibitem{DGZ}
R. Deville, G. Godefroy and V. Zizler.  {\it Smoothness  and
renormings in Banach spaces}. Pitman Monographs and Surveys in
Pure and Applied Math. {\bf 64},  1993.
%
\bibitem{FHHMPZ}
M. Fabian, P. Habala, P. H\'ajek, V. Montesinos, J. Pelant and  V.
Zizler, {\it Functional Analysis and Infinite-dimensional Geometry
,} CM Books in Mathematics. Springer-Verlag. Berlin 2001.
%
\bibitem{GGMS}
N. Ghoussoub, G. Godefroy, B. Maurey and  W. Schachermayer, {\it
Some topological and geometrical structures in Banach spaces,}
Mem. Amer. Math. Soc. {\bf 378}, 1987.
%
\bibitem{GMS}
N. Ghoussoub, B. Maurey and  W. Schachermayer, {\it Geometrical
implications of certain infinite-dimensional decomposition,}
Trans. Amer. Math. Soc. {\bf 317} (1990), 541--584.
%
\bibitem{J} R. C. James, A separable somewhat reflexive Banach space
with nonsepa- rable dual , Bull. Amer. Math. Soc. {\bf 80} (1974),
738--743.
%
\bibitem{LoPe}
 G. L{\'o}pez-P{\'e}rez,  \textit{The  big slice phenomena in M-embedded
and L-embedded spaces,} Proc. Amer. Math. Soc. \textbf{134}
(2005), 273--282.
%
\bibitem{NyWe}
O. Nygaard and D. Werner, \textit{Slices in the unit ball of a
uniform algebra,} Arch. Math. \textbf{76} (2001), 441--444.
%
\bibitem{ScSeWe}
W. Schachermayer, A. Sersouri and E. Werner, \textit{Moduli of
nondentability and the Radon-Nikod{\'y}m property in Banach spaces},
Israel J. Math. \textbf{65} (3) (1989),  225--257.
%
\bibitem{Sh}
 R.V. Shvydkoy, \emph{Geometric aspects of the Daugavet
property}, J. Funct. Anal. \textbf{176} (2000), 198--212.
%

\end{thebibliography}
\end{document}